\newcommand{\CC}{\mathbb{C}}
\newcommand{\NN}{\mathbb{N}}
\newcommand{\ZZ}{\mathbb{Z}}
\newcommand{\Hc}{\mathcal{H}}
\newcommand{\Oc}{\mathcal{O}}
\newcommand{\gfr}{\mathfrak{g}}
\newcommand{\set}[1]{\left\{ #1 \right\}}
\newcommand{\setb}[1]{\left( #1 \right)}
\newcommand{\abs}[1]{\left| #1 \right|}
\newcommand{\bino}[2]{\begin{pmatrix} #1 \\ #2 \end{pmatrix}}
\newtheorem{mymasterthm}{notForUse}
\theoremstyle{definition}
\newtheorem{myrem}[mymasterthm]{Remark}
\theoremstyle{plain}
\newtheorem{mylemma}[mymasterthm]{Lemma}
\newtheorem{mythm}[mymasterthm]{Theorem}
\title[$ S $-unit values of $ G_n + G_m $ in function fields]{$ S $-unit values of $ G_n + G_m $ in function fields}
\subjclass[2020]{11B37}
\keywords{Linear recurrences, S-units}
\author[S. Heintze]{Sebastian Heintze}
\address{Sebastian Heintze\newline
	\indent Graz University of Technology\newline
	\indent Institute of Analysis and Number Theory\newline
	\indent Steyrergasse 30/II \newline
	\indent A-8010 Graz, Austria}
\email{heintze@math.tugraz.at}
\thanks{Supported by Austrian Science Fund (FWF) under project I4406 and I4945-N}
\begin{document}
	
	\maketitle
	
	
	\begin{abstract}
		In this paper we consider a simple linear recurrence sequence $ G_n $ defined over a function field in one variable over the field of complex numbers.
		We prove an upper bound on the indices $ n $ and $ m $ such that $ G_n + G_m $ is an $ S $-unit.
		This is a function field analogue of already known results in number fields.
	\end{abstract}
	
	\section{Introduction}
	
	By a linear recurrence sequence we mean a sequence $ (G_n)_{n \in \NN} $ given by its Binet representation
	\begin{equation*}
		G_n = f_1 \alpha_1^n + \cdots + f_d \alpha_d^n.
	\end{equation*}
	It is well known that such a sequence satisfies a linear recurrence relation.
	Without loss of generality we may assume that the characteristic roots $ \alpha_i $ are pairwise distinct and that all $ \alpha_i $ and $ f_i $ are non-zero.
	In general the $ f_i $ are polynomials in $ n $, but we will restrict ourselves to the case when the coefficients $ f_i $ are constant (i.e. independent of $ n $).
	Such a linear recurrence sequence is called simple.
	If $ G_n $ is defined over a number field, then it is called non-degenerate if no quotient $ \alpha_i / \alpha_j $ for $ i \neq j $ is a root of unity, and if $ G_n $ is defined over a function field, then it is called non-degenerate if no quotient $ \alpha_i / \alpha_j $ for $ i \neq j $ is constant.
	
	In the number field case the behaviour of the sum of two elements of such a linear recurrence sequence was already studied by several authors.
	Bravo and Luca considered in \cite{bravo-luca-2016} the Diophantine equation
	\begin{equation*}
		F_n + F_m = 2^a
	\end{equation*}
	where $ (F_n)_{n \in \NN} $ denotes the sequence of Fibonacci numbers.
	This result was then generalized by Pink and Ziegler \cite{pink-ziegler-2018} to the more general equation
	\begin{equation*}
		G_n + G_m = w p_1^{z_1} \cdots p_s^{z_s}
	\end{equation*}
	for a binary non-degenerate linear recurrence sequence $ (G_n)_{n \in \NN} $ and primes $ p_1, \ldots, p_s $.
	In other words, they studied the situation when the sum $ G_n + G_m $ is a so-called $ S $-unit, i.e. has only primes of $ S $ in its prime factorization.
	Moreover, the case that $ G_n + G_m $ is a sum of $ S $-units was recently studied by Bhoi et al. in \cite{bhoi-panda-rout-}.
	
	The purpose of the present paper is to study the analogous situation in a function field setting (of characteristic zero).
	Namely, we are interested in bounding the indices of a linear recurrence sequence, defined over a function field in one variable over the field of complex numbers, if the sum of some elements of this sequence is an $ S $-unit.
	
	\section{Notation and results}
	
	Throughout the whole paper we denote by $ F $ a function field in one variable over $ \CC $ and by $ \gfr $ the genus of $ F $.
	We will work with valuations and give here for the readers convenience a short wrap-up of this notion that can e.g. also be found in \cite{fuchs-heintze-p8}:
	For $ c \in \CC $ and $ f(x) \in \CC(x) $, where $ \CC(x) $ is the rational function field over $ \CC $, we denote by $ \nu_c(f) $ the unique integer such that $ f(x) = (x-c)^{\nu_c(f)} p(x) / q(x) $ with $ p(x),q(x) \in \CC[x] $ such that $ p(c)q(c) \neq 0 $. Further we write $ \nu_{\infty}(f) = \deg q - \deg p $ if $ f(x) = p(x) / q(x) $.
	These functions $ \nu : \CC(x) \rightarrow \ZZ $ are up to equivalence all valuations in $ \CC(x) $.
	If $ \nu_c(f) > 0 $, then $ c $ is called a zero of $ f $, and if $ \nu_c(f) < 0 $, then $ c $ is called a pole of $ f $, where $ c \in \CC \cup \set{\infty} $.
	For a finite extension $ F $ of $ \CC(x) $ each valuation in $ \CC(x) $ can be extended to no more than $ [F : \CC(x)] $ valuations in $ F $. This again gives up to equivalence all valuations in $ F $.
	Both, in $ \CC(x) $ as well as in $ F $ the sum-formula
	\begin{equation*}
		\sum_{\nu} \nu(f) = 0
	\end{equation*}
	holds, where the sum is taken over all valuations (up to equivalence) in the considered function field.
	For a finite set $ S $ of valuations on $ F $, we denote by $ \Oc_S^* $ the set of $ S $-units in $ F $, i.e. the set
	\begin{equation*}
		\Oc_S^* = \set{f \in F^* : \nu(f) = 0 \text{ for all } \nu \notin S}.
	\end{equation*}
	Finally, we call two elements $ \alpha, \beta \in F $ multiplicatively independent if $ \alpha^r \beta^s \in \CC $ for $ r,s \in \ZZ $ implies that $ r = s = 0 $.
	
	Our first theorem is the following finiteness result which states that there are only finitely many $ S $-units in a linear recurrences sequence of order at least two:
	
	\begin{mythm}
		\label{thm:onesummand}
		Let $ G_n = f_1 \alpha_1^n + \cdots + f_d \alpha_d^n $ be a simple linear recurrence sequence of order $ d \geq 2 $, defined over $ F $.
		Assume that $ G_n $ is non-degenerate.
		Furthermore, let $ S $ be a finite set of valuations on $ F $.
		Then there exists an effectively computable constant $ C $ such that
		\begin{equation*}
			G_n \in \Oc_S^*
		\end{equation*}
		implies $ n \leq C $.
	\end{mythm}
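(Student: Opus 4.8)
The plan is to reduce the assertion to the function field $ S $-unit equation. First I would enlarge $ S $ to a finite set $ S' $ by adjoining every valuation $ \nu $ with $ \nu(f_i) \neq 0 $ or $ \nu(\alpha_i) \neq 0 $ for some $ i $; since each $ f_i $ and each $ \alpha_i $ has only finitely many zeros and poles, $ S' $ is finite. With this choice every term $ f_i \alpha_i^n $ lies in $ \Oc_{S'}^* $, and since $ G_n \in \Oc_S^* \subseteq \Oc_{S'}^* $ (in particular $ G_n \neq 0 $), the Binet representation yields the vanishing $ S' $-unit relation
\begin{equation*}
	f_1 \alpha_1^n + \cdots + f_d \alpha_d^n - G_n = 0
\end{equation*}
among the $ d+1 $ nonzero terms $ f_1\alpha_1^n, \ldots, f_d\alpha_d^n, -G_n $.

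Next I would pass to a minimal vanishing subsum (a nonempty subset of these terms summing to zero with no proper nonempty vanishing subset) and argue combinatorially that one can always be chosen to contain two of the monomials $ f_i\alpha_i^n $ and $ f_j\alpha_j^n $ with $ i \neq j $. Indeed, no single term can vanish, so every minimal vanishing subsum has at least two terms; if one of them already contains two monomials we are done, and the only alternative is a relation $ \set{f_i\alpha_i^n, -G_n} $, i.e.\ $ G_n = f_i\alpha_i^n $, which forces $ \sum_{j \neq i} f_j\alpha_j^n = 0 $, a shorter vanishing sum of monomials that (using $ d \geq 2 $) again supplies a minimal subsum with two distinct monomials.

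Then I would apply the Brownawell--Masser theorem on vanishing sums of $ S $-units in function fields to the chosen minimal subsum, which has $ k \leq d+1 $ terms and at least one non-constant ratio, since $ \alpha_i/\alpha_j $ is non-constant by non-degeneracy. Writing $ H(f) = -\sum_{\nu}\min\set{0,\nu(f)} $ for the usual height on $ F $, this yields an effectively computable bound
\begin{equation*}
	H\!\left(\frac{f_i\alpha_i^n}{f_j\alpha_j^n}\right) \leq \binom{d}{2}\bigl(\abs{S'} + 2\gfr - 2\bigr) =: C_1 .
\end{equation*}
Finally, using $ H(x^n) = \abs{n}\,H(x) $ and $ H(xy) \leq H(x) + H(y) $, I would estimate the left-hand side from below by $ H(\tfrac{f_i}{f_j}(\alpha_i/\alpha_j)^n) \geq n\,H(\alpha_i/\alpha_j) - H(f_i/f_j) $, and since non-degeneracy forces $ H(\alpha_i/\alpha_j) \geq 1 $, combining the two inequalities gives
\begin{equation*}
	n \leq \frac{C_1 + \max_{k \neq l} H(f_k/f_l)}{\min_{k \neq l} H(\alpha_k/\alpha_l)} =: C ,
\end{equation*}
an effectively computable constant, as desired.

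The step I expect to be the main obstacle is the combinatorial reduction to a minimal vanishing subsum containing two distinct monomials $ f_i\alpha_i^n $ and $ f_j\alpha_j^n $: this is precisely what allows non-degeneracy (a non-constant ratio $ \alpha_i/\alpha_j $) to be brought to bear against the height bound, and one must verify carefully that the excluded configurations are genuinely impossible, with the degenerate case $ G_n = f_i\alpha_i^n $ collapsing, via $ d \geq 2 $, to an honest relation among the monomials. The remaining ingredients---the enlargement of $ S $, the elementary height estimates, and the tracking of effectivity---are routine.
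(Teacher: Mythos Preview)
Your argument is correct and follows essentially the same route as the paper: enlarge $S$, view $G_n = s$ as an $S$-unit equation, locate a minimal vanishing subsum containing two monomials $f_i\alpha_i^n$ and $f_j\alpha_j^n$, apply Brownawell--Masser, and finish with height estimates and non-degeneracy. You are in fact more careful than the paper about the combinatorial step (the paper simply asserts that such a subsum exists), though note two small slips: your Brownawell--Masser bound should carry $\max\setb{0,2\gfr-2}$ rather than $2\gfr-2$, and in the $d=2$ case your ``shorter vanishing sum of monomials'' is a single nonzero term, which yields a contradiction rather than a further subsum---either way the conclusion stands.
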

	
	In our second theorem we consider now the sum of two elements of a linear recurrence sequence:
	
	\begin{mythm}
		\label{thm:twosummands}
		Let $ G_n = f_1 \alpha_1^n + \cdots + f_d \alpha_d^n $ be a simple linear recurrence sequence of order $ d \geq 2 $, defined over $ F $.
		Assume that $ G_n $ is non-degenerate, that no $ \alpha_i $ lies in $ \CC $, and that for all $ i \neq j $ the characteristic roots $ \alpha_i $ and $ \alpha_j $ are multiplicatively independent.
		Furthermore, let $ S $ be a finite set of valuations on $ F $.
		Then there exists an effectively computable constant $ C $ such that
		\begin{equation*}
			G_n + G_m \in \Oc_S^*
		\end{equation*}
		for $ n > m $ implies $ \max \setb{n,m} \leq C $.
	\end{mythm}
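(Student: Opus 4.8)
The plan is to linearise the relation into a vanishing $ S $-unit sum and then control it by the function field analogue of the $ S $-unit equation theorem (of Brownawell--Masser/Zannier type), which already underlies the proof of Theorem \ref{thm:onesummand}. First I would enlarge $ S $ to a finite set $ S^* \supseteq S $ containing in addition all zeros and poles of every $ \alpha_i $ and every $ f_i $; then each $ f_i $ and each $ \alpha_i $ lies in $ \Oc_{S^*}^* $, and since $ S \subseteq S^* $ the hypothetical value $ u := G_n + G_m \in \Oc_S^* $ also lies in $ \Oc_{S^*}^* $. Writing $ x_i = f_i \alpha_i^n $ and $ y_i = f_i \alpha_i^m $ for $ 1 \le i \le d $, the equation turns into the vanishing sum
\begin{equation*}
	x_1 + \cdots + x_d + y_1 + \cdots + y_d - u = 0
\end{equation*}
of $ 2d+1 $ elements of $ \Oc_{S^*}^* $.

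Next I would partition these $ 2d+1 $ terms into minimal vanishing subsums. No such subsum can be a singleton, since every $ x_i $, every $ y_i $ and $ u $ is nonzero. To each block $ T $, whose terms sum to zero with no vanishing proper subsum, I apply the function field $ S $-unit equation bound, which yields an effective constant $ C_1 = C_1(d, \abs{S^*}, \gfr) $ such that $ h(a/b) \le C_1 $ for all $ a,b $ in the same block, where $ h(\cdot) $ denotes the height attached to the valuations on $ F $.

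The extraction of a bound on $ n $ then rests on three height facts, each tied to one hypothesis: (i) for $ i \neq j $ one has $ h((\alpha_i/\alpha_j)^n) = n\, h(\alpha_i/\alpha_j) $ with $ h(\alpha_i/\alpha_j) > 0 $ by non-degeneracy; (ii) $ h(\alpha_i^{n-m}) = (n-m) h(\alpha_i) $, and $ \alpha_i^{n-m} $ is non-constant for $ n > m $ since no $ \alpha_i $ lies in $ \CC $; (iii) for $ i \neq j $, multiplicative independence makes the divisors of $ \alpha_i $ and $ \alpha_j $ linearly independent, so $ h $ restricts to a norm on the rank-two group they generate in $ F^*/\CC^* $, whence $ h(\alpha_i^n \alpha_j^{-m}) \ge c \max\setb{n,m} $ for an effective $ c > 0 $. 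Now, if some block contains two of the $ x_i $, say $ x_i, x_j $ with $ i \neq j $, then $ n\, h(\alpha_i/\alpha_j) \le C_1 + h(f_i/f_j) $ bounds $ n $ by (i). Otherwise every block contains at most one $ x_i $; as there are $ d \ge 2 $ of them and only one block contains $ u $, some block $ T $ holds exactly one $ x_i $, no $ u $, and hence at least one $ y_j $. If $ j \neq i $, then (iii) gives $ c \max\setb{n,m} \le C_1 + h(f_i/f_j) $, bounding $ n $. If the only companion were $ y_i $, then $ T = \set{x_i, y_i} $ would force $ x_i + y_i = 0 $, i.e. $ \alpha_i^{n-m} = -1 $, contradicting (ii). In every case $ n $, and therefore $ \max\setb{n,m} = n $, is bounded by an effective constant $ C $.

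The main obstacle is the combinatorial control of the minimal vanishing subsums: one must force two \emph{useful} terms into a common block. The decisive points are that a same-index pair $ \set{x_i, y_i} $ can never vanish on its own (this is exactly where non-degeneracy and $ \alpha_i \notin \CC $ enter) and the norm lower bound in (iii) for the mixed terms, which is the technical heart of the argument and the only place where multiplicative independence of distinct characteristic roots is needed.
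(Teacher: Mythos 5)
Your proof is correct, and its combinatorial organization is genuinely different from --- and in one respect leaner than --- the paper's. Both arguments rest on the same two pillars: the Brownawell--Masser bound applied to each minimal vanishing subsum of the $ 2d+1 $-term relation, and the height lower bound $ \Hc(\alpha_i^n\alpha_j^{-m}) \geq c \max\setb{n,m} $ coming from multiplicative independence (the paper cites this as Lemma \ref{lemma:quotofindep} from \cite{fuchs-heintze-p8}; your norm-equivalence sketch on the rank-two divisor lattice is essentially its proof and is fine). The differences: the paper first bounds $ m $ via a block with two $ m $-terms and then runs several sub-cases to bound $ n $, whereas you exploit $ n > m $ to aim only at $ n $ and organize everything around the $ d $ terms $ x_i = f_i\alpha_i^n $. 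More substantially, the paper's hardest case --- every block pairing $ f_i\alpha_i^n $ with $ f_i\alpha_i^m $ for the \emph{same} index --- only yields a bound on $ n - m $, after which the paper substitutes $ n = m + b $, enlarges $ S $, and invokes Theorem \ref{thm:onesummand} to bound $ m $. You sidestep this reduction entirely by the observation that, since the $ d \geq 2 $ terms $ x_i $ lie in $ d $ distinct blocks and only one block can contain $ u $, some block $ T $ contains an $ x_i $ but not $ u $; if $ T = \set{x_i, y_i} $ it would force $ \alpha_i^{n-m} = -1 \in \CC $, impossible as $ \alpha_i \notin \CC $, so $ T $ must contain some $ y_j $ with $ j \neq i $ and the multiplicative-independence bound finishes. (The paper cannot make this move for the block it happens to be looking at, because that block may contain $ -s $.) What each buys: your route is shorter and does not use Theorem \ref{thm:onesummand} as an ingredient; the paper's two-stage scheme, including the same-root reduction, is the one that scales to $ t > 2 $ summands as described in the Remark, where a $ u $-free block can contain three or more same-root terms and no contradiction arises. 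One cosmetic point: state explicitly that a single uniform $ C_1 $ works because every block has at most $ 2d+1 $ terms, and that $ \Hc(\alpha_i/\alpha_j) \geq 1 $ (integrality of the height) so that dividing by it is harmless.
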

	
	Before we start with preparing the proofs of our theorems let us give a remark on possible generalizations of our results:
	
	\begin{myrem}
		The proof of Theorem \ref{thm:twosummands} given below can be adapted to the case when we consider the analogous situation for more summands.
		More precisely, under the conditions of Theorem \ref{thm:twosummands}, we get for
		\begin{equation}
			\label{eq:generalized}
			G_{n_1} + \cdots + G_{n_t} \in \Oc_S^*
		\end{equation}
		with $ n_1 > \cdots > n_t $ an effectively computable upper bound $ \max_i n_i \leq C $.
		The proof has the same strategy as the proof of Theorem \ref{thm:twosummands}.
		Considering the related $ S $-unit equation, one first bounds the $ n_i $ appearing twice in a minimal vanishing subsum.
		In the next step those $ n_i $ which are in the same minimal vanishing subsum as an already bounded one were bounded.
		Finally, one has to consider the situation that some of the exponents only appear in minimal vanishing subsums with different exponents that are not bounded so far.
		Here the case if the same characteristic root occurs twice can be reduced to an equation of type \eqref{eq:generalized} with less summands (some kind of induction), and the case of different appearing characteristic roots uses the multiplicative independence assumption.
		We leave it up to the interested reader to write this down in detail.
	\end{myrem}
	
	\section{Preliminaries}
	
	The proofs in the next section will use height functions in function fields. Hence, let us define the height of an element $ f \in F^* $ by
	\begin{equation*}
		\Hc(f) := - \sum_{\nu} \min \setb{0, \nu(f)} = \sum_{\nu} \max \setb{0, \nu(f)}
	\end{equation*}
	where the sum is taken over all valuations (up to equivalence) on the function field $ F / \CC $. Additionally we define $ \Hc(0) = \infty $.
	This height function satisfies some basic properties, listed in the lemma below which is proven in \cite{fuchs-karolus-kreso-2019}:
	
	\begin{mylemma}
		\label{lemma:heightproperties}
		Denote as above by $ \Hc $ the height on $ F/\CC $. Then for $ f,g \in F^* $ the following properties hold:
		\begin{enumerate}[a)]
			\item $ \Hc(f) \geq 0 $ and $ \Hc(f) = \Hc(1/f) $,
			\item $ \Hc(f) - \Hc(g) \leq \Hc(f+g) \leq \Hc(f) + \Hc(g) $,
			\item $ \Hc(f) - \Hc(g) \leq \Hc(fg) \leq \Hc(f) + \Hc(g) $,
			\item $ \Hc(f^n) = \abs{n} \cdot \Hc(f) $,
			\item $ \Hc(f) = 0 \iff f \in \CC^* $,
			\item $ \Hc(A(f)) = \deg A \cdot \Hc(f) $ for any $ A \in \CC[T] \setminus \set{0} $.
		\end{enumerate}
	\end{mylemma}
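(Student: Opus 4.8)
The plan is to derive all six statements directly from the definition, using only the structural facts already recalled in the excerpt: every valuation satisfies $\nu(fg)=\nu(f)+\nu(g)$ and the ultrametric inequality $\nu(f+g)\geq\min\setb{\nu(f),\nu(g)}$ (with equality whenever $\nu(f)\neq\nu(g)$), together with the sum-formula $\sum_{\nu}\nu(f)=0$. The sum-formula is precisely what makes the two displayed expressions for $\Hc$ agree, since $\sum_{\nu}\max\setb{0,\nu(f)}+\sum_{\nu}\min\setb{0,\nu(f)}=\sum_{\nu}\nu(f)=0$, and I will use whichever of the two shapes (zero-counting or pole-counting) is convenient in each part. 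I will also use freely that $\nu(1/f)=-\nu(f)$ and that $\nu(c)=0$ for every constant $c\in\CC^*$ and every $\nu$. For a), nonnegativity is immediate because each summand $\max\setb{0,\nu(f)}$ is nonnegative; for $\Hc(1/f)=\Hc(f)$ I substitute $\nu(1/f)=-\nu(f)$ and note $\max\setb{0,-\nu(f)}=-\min\setb{0,\nu(f)}$, so the zero-counting form of $\Hc(1/f)$ is the pole-counting form of $\Hc(f)$.

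The upper bounds in b) and c) are term-by-term estimates summed over $\nu$. For the product, $\nu(fg)=\nu(f)+\nu(g)$ together with the elementary inequality $\max\setb{0,a+b}\leq\max\setb{0,a}+\max\setb{0,b}$ gives $\Hc(fg)\leq\Hc(f)+\Hc(g)$. For the sum, the ultrametric inequality yields $\min\setb{0,\nu(f+g)}\geq\min\setb{0,\nu(f)}+\min\setb{0,\nu(g)}$, and summing with a minus sign gives $\Hc(f+g)\leq\Hc(f)+\Hc(g)$. The lower bounds are then formal: writing $f=(f+g)+(-g)$ respectively $f=(fg)\cdot(1/g)$ and applying the upper bound just proved, together with $\Hc(-g)=\Hc(g)$ respectively $\Hc(1/g)=\Hc(g)$ from a), produces $\Hc(f)\leq\Hc(f+g)+\Hc(g)$ and $\Hc(f)\leq\Hc(fg)+\Hc(g)$, which rearrange to the stated bounds.

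For d), when $n\geq 0$ one has $\max\setb{0,\nu(f^n)}=\max\setb{0,n\,\nu(f)}=n\max\setb{0,\nu(f)}$, and summing gives $\Hc(f^n)=n\,\Hc(f)$; for $n<0$ I write $f^n=(1/f)^{\abs{n}}$ and invoke a). For e), if $f\in\CC^*$ then $\nu(f)=0$ for all $\nu$, so $\Hc(f)=0$. Conversely $\Hc(f)=0$ forces $\max\setb{0,\nu(f)}=0$ for every $\nu$, so $f$ has no poles; by a) the same holds for $1/f$, so $f$ has neither zeros nor poles, whence $f\in\CC^*$ by the standard fact that the only everywhere-regular functions on $F/\CC$ are the constants (equivalently, $\CC$ is the full constant field, being algebraically closed). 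This invocation is the single genuinely non-formal ingredient.

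For f), I factor $A(T)=c\prod_{i=1}^{k}(T-a_i)$ with $c\in\CC^*$, $a_i\in\CC$ and $k=\deg A$, using that $\CC$ is algebraically closed, and argue pole by pole. At a valuation with $\nu(f)<0$ the strict-inequality case of the ultrametric law gives $\nu(f-a_i)=\nu(f)$ for each $i$ (since $\nu(a_i)\geq 0>\nu(f)$), so $\nu(A(f))=k\,\nu(f)<0$; at a valuation with $\nu(f)\geq 0$ each $\nu(f-a_i)\geq 0$, hence $\nu(A(f))\geq 0$ and $A(f)$ has no pole there. Thus the polar divisor of $A(f)$ is exactly $k$ times that of $f$, and summing the pole-counting form of the height yields $\Hc(A(f))=k\,\Hc(f)=\deg A\cdot\Hc(f)$. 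All of these computations are elementary; the only step resting on a structural theorem beyond valuation arithmetic is the constant-field fact used in e), and the main point to keep straight throughout is the switch between the zero-counting and pole-counting expressions for $\Hc$, which is legitimate precisely because of the sum-formula.
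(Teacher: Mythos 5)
Your proof is correct, but there is nothing in the paper to compare it against: the paper does not prove Lemma \ref{lemma:heightproperties} at all, it simply cites \cite{fuchs-karolus-kreso-2019}. Your direct verification from valuation arithmetic (multiplicativity, the ultrametric inequality with its strict-equality case, $\nu(c)=0$ for $c\in\CC^*$) together with the sum-formula is exactly the standard route one would find in such a reference, and every step checks out: the term-by-term inequalities $\min\setb{0,\nu(f+g)}\geq\min\setb{0,\nu(f)}+\min\setb{0,\nu(g)}$ and $\max\setb{0,a+b}\leq\max\setb{0,a}+\max\setb{0,b}$ do sum to the upper bounds in b) and c), the decompositions $f=(f+g)+(-g)$ and $f=(fg)\cdot(1/g)$ give the lower bounds formally, and your pole-by-pole computation in f) correctly shows the polar divisor of $A(f)$ is $\deg A$ times that of $f$. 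Three small remarks. First, in e) you say $\max\setb{0,\nu(f)}=0$ for all $\nu$ means ``$f$ has no poles''; with the paper's convention ($\nu_c(f)>0$ a zero, $\nu_c(f)<0$ a pole) it means $f$ has no \emph{zeros} --- but your argument is unaffected, since you apply the same reasoning to $1/f$ and conclude $f$ has neither zeros nor poles. Second, you are right to flag the constant-field fact as the one non-formal ingredient: pole-free elements lie in the full constant field of $F$, which equals $\CC$ because $\CC$ is algebraically closed. Third, the upper bound in b) and the identity in f) implicitly require $f+g\neq 0$ and $A(f)\neq 0$ (the paper's convention $\Hc(0)=\infty$ would otherwise break them, e.g.\ when $f\in\CC^*$ is a root of $A$); this is an imprecision in the lemma's phrasing rather than in your proof, but it would be worth a sentence.
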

	
	Furthermore, the following theorem due to Brownawell and Masser is an important ingredient in our proofs. It is an immediate consequence of Theorem B in \cite{brownawell-masser-1986}:
	
	\begin{mythm}[Brownawell-Masser]
		\label{thm:brownawellmasser}
		Let $ F/\CC $ be a function field in one variable of genus $ \gfr $. Moreover, for a finite set $ S $ of valuations, let $ u_1,\ldots,u_k $ be $ S $-units and
		\begin{equation*}
			1 + u_1 + \cdots + u_k = 0,
		\end{equation*}
		where no proper subsum of the left hand side vanishes. Then we have
		\begin{equation*}
			\max_{i=1,\ldots,k} \Hc(u_i) \leq \bino{k}{2} \left( \abs{S} + \max \setb{0, 2\gfr-2} \right).
		\end{equation*}
	\end{mythm}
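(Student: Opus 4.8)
The plan is to obtain this statement as a direct corollary of Theorem B of \cite{brownawell-masser-1986}, which bounds the projective height of a vanishing sum of $S$-units on $F$ in terms of $\abs{S}$ and the genus $\gfr$. First I would pad the relation: rewrite $1 + u_1 + \cdots + u_k = 0$ as a vanishing sum $x_0 + x_1 + \cdots + x_k = 0$ of $k+1$ terms with $x_0 = 1$ and $x_i = u_i$. Since $1$ is trivially an $S$-unit and the $u_i$ are $S$-units by hypothesis, all $k+1$ summands are $S$-units, and the assumption that no proper subsum of $1 + u_1 + \cdots + u_k$ vanishes is inherited verbatim by $x_0 + \cdots + x_k$. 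If all the $u_i$ were constant, then $\Hc(u_i) = 0$ by part (e) of Lemma \ref{lemma:heightproperties} and the claimed inequality holds trivially since its right-hand side is nonnegative; hence I may assume the summands are not all constant, which is the admissibility hypothesis needed to apply Theorem B to the system $x_0, \ldots, x_k$.

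It then remains to translate the projective height bound delivered by Theorem B into the individual bounds claimed. Writing that height as $H = -\sum_{\nu} \min_{0 \le i \le k} \nu(x_i)$, the normalization $x_0 = 1$ gives $\nu(x_0) = 0$ for every valuation $\nu$, so $\min_i \nu(x_i) \le 0$ and therefore $-\min_i \nu(x_i) \ge \max\setb{0, -\nu(x_j)}$ for each fixed $j$. Summing over all valuations yields $\Hc(u_j) = \Hc(1/u_j) = \sum_{\nu} \max\setb{0, \nu(1/u_j)} \le H$ for every $j$, where the first equality is part (a) of Lemma \ref{lemma:heightproperties}. Since the version of Theorem B for a vanishing sum of $N$ summands carries the combinatorial factor $\bino{N-1}{2}$, the choice $N = k+1$ produces exactly $\bino{k}{2}$, and we conclude $\max_i \Hc(u_i) \le H \le \bino{k}{2} \setb{\abs{S} + \max\setb{0, 2\gfr - 2}}$, as desired.

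Should one instead want an argument from first principles rather than a citation, I would reprove the inequality by the Wronskian method that underlies Brownawell--Masser: fix a separating element of $F$ and the associated derivation $D$, form the Wronskian $W$ of a suitable selection of the functions $1, u_1, \ldots, u_k$ with respect to $D$, and relate the zeros and poles of each $u_i$ to those of $W$. The zeros of $W$ are then estimated by a Riemann--Hurwitz/Riemann--Roch count, which is precisely where the genus term $2\gfr - 2$ enters. The main obstacle in this self-contained route is twofold: arranging the non-degeneracy that makes $W$ nonzero (the bare no-vanishing-subsum condition does not by itself force $\CC$-linear independence of the chosen functions, so one must work through the minimal vanishing relations), and carrying out the divisor estimate so that it yields the clean factor $\bino{k}{2}$ together with the sharp dependence on $\abs{S}$ and $\gfr$. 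Via the reduction of the first two paragraphs this entire difficulty is absorbed into the cited Theorem B, and the only care required is to match their height normalization with the height $\Hc$ defined above and to confirm that the padded system $x_0, \ldots, x_k$ meets their hypotheses.
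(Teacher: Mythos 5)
Your proposal is correct and takes essentially the same route as the paper, which gives no independent proof and simply states that the theorem is an immediate consequence of Theorem B of \cite{brownawell-masser-1986} --- exactly the citation you invoke. Your write-up merely makes the routine deduction explicit (padding the relation with $x_0 = 1$, comparing the projective height with each individual $\Hc(u_i)$, and matching the combinatorial factor $\bino{k}{2}$), all of which is accurate.
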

	
	Finally, we need an auxiliary result about multiplicatively independent elements which is proven in \cite{fuchs-heintze-p8}:
	
	\begin{mylemma}
		\label{lemma:quotofindep}
		Let $ \gamma, \delta \in F \setminus \CC $ be multiplicatively independent and $ n,m \in \NN $. Assume that
		\begin{equation*}
			\Hc \left( \frac{\gamma^n}{\delta^m} \right) \leq L.
		\end{equation*}
		Then there exists an effectively computable constant $ C $, depending only on $ \gamma, \delta, \gfr $ and $ L $, such that
		\begin{equation*}
			\max \setb{n,m} \leq C.
		\end{equation*}
	\end{mylemma}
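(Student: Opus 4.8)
The plan is to descend from the single height inequality to a finite family of pointwise constraints, one for each valuation, and then to use the multiplicative independence of $ \gamma $ and $ \delta $ to convert two of these constraints into a genuine bound on the pair $ (n,m) $. First I would record the elementary observation that for any $ f \in F^* $ and any valuation $ \nu $ one has $ \abs{\nu(f)} \leq \Hc(f) $: indeed $ \max\setb{0,\nu(f)} $ is one of the non-negative summands of $ \Hc(f) = \sum_\nu \max\setb{0,\nu(f)} $, while $ \max\setb{0,-\nu(f)} $ is a summand of $ \sum_\nu \max\setb{0,-\nu(f)} = \Hc(1/f) = \Hc(f) $ by Lemma \ref{lemma:heightproperties} a), and one of these two quantities equals $ \abs{\nu(f)} $. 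Applying this with $ f = \gamma^n/\delta^m $, so that $ \nu(f) = n\nu(\gamma) - m\nu(\delta) $, the hypothesis $ \Hc(\gamma^n/\delta^m) \leq L $ immediately gives
\begin{equation*}
	\abs{n\nu(\gamma) - m\nu(\delta)} \leq L \qquad \text{for every valuation } \nu \text{ on } F.
\end{equation*}
Only the finitely many valuations lying in the support of $ \gamma $ or $ \delta $ (the zeros and poles of $ \gamma $ and $ \delta $) carry non-trivial information, and these can be enumerated effectively.

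The heart of the argument, and the step I expect to require the most care, is to produce two valuations $ \nu_1,\nu_2 $ for which the vectors $ (\nu_1(\gamma),\nu_1(\delta)) $ and $ (\nu_2(\gamma),\nu_2(\delta)) $ are linearly independent over $ \QQ $; this is precisely where multiplicative independence enters. Suppose no such pair existed. Then all the integer vectors $ (\nu(\gamma),\nu(\delta)) $ would lie in a single one-dimensional rational subspace, whose annihilator is generated by an integer vector $ (r,s) \neq (0,0) $, so that $ r\nu(\gamma) + s\nu(\delta) = 0 $, i.e. $ \nu(\gamma^r \delta^s) = 0 $, for every valuation $ \nu $. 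By Lemma \ref{lemma:heightproperties} e) this forces $ \gamma^r \delta^s \in \CC^* $, contradicting the multiplicative independence of $ \gamma $ and $ \delta $. (The hypothesis $ \gamma,\delta \notin \CC $ ensures that the span is nonzero, so it is in fact two-dimensional.) Hence the required $ \nu_1,\nu_2 $ exist and can be located effectively by inspecting the finite support already computed.

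Finally I would solve the resulting $ 2 \times 2 $ linear system. Writing $ e_i = n\nu_i(\gamma) - m\nu_i(\delta) $ with $ \abs{e_i} \leq L $, the determinant $ D = \nu_1(\gamma)\nu_2(\delta) - \nu_1(\delta)\nu_2(\gamma) $ is a nonzero integer, so $ \abs{D} \geq 1 $. Cramer's rule then yields
\begin{equation*}
	\abs{n} \leq \frac{\abs{e_1}\abs{\nu_2(\delta)} + \abs{e_2}\abs{\nu_1(\delta)}}{\abs{D}} \leq L \setb{\abs{\nu_1(\delta)} + \abs{\nu_2(\delta)}},
\end{equation*}
and symmetrically $ \abs{m} \leq L \setb{\abs{\nu_1(\gamma)} + \abs{\nu_2(\gamma)}} $. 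Taking $ C $ to be the maximum of these two right-hand sides gives an effectively computable bound on $ \max\setb{n,m} $ depending only on $ \gamma,\delta,\gfr $ and $ L $, as claimed. Thus the outer two steps are routine once the two independent valuations of the middle step are secured; that middle step, resting on the characterization of constants via the height, is the only genuinely structural ingredient.
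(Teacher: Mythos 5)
Your proof is correct: the passage from the height bound to $\abs{n\nu(\gamma)-m\nu(\delta)}\leq L$ for every valuation $\nu$, the use of multiplicative independence (via Lemma \ref{lemma:heightproperties} e)) to extract two valuations with linearly independent vectors $(\nu_i(\gamma),\nu_i(\delta))$, and the Cramer's rule estimate with $\abs{D}\geq 1$ all hold up, and the resulting constant is effectively computable from the (finite, computable) support of $\gamma$ and $\delta$. The paper itself gives no proof of this lemma but defers to \cite{fuchs-heintze-p8}, where the argument is essentially the one you present, so there is nothing further to reconcile.
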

	
	\section{Proofs}
	
	Now all necessary preparations are finished and we start with proving our first theorem:
	
	\begin{proof}[Proof of Theorem \ref{thm:onesummand}]
		Without loss of generality we may assume that $ \alpha_1, \ldots, \alpha_d $ as well as $ f_1, \ldots, f_d $ are all $ S $-units since increasing the set $ S $ only makes the theorem stronger.
		Then the condition $ G_n \in \Oc_S^* $ can be written as the Diophantine equation
		\begin{equation}
			\label{eq:suniteqth1}
			f_1 \alpha_1^n + \cdots + f_d \alpha_d^n - s = 0
		\end{equation}
		in unknowns $ n \in \NN $ and $ s \in \Oc_S^* $.
		This is an $ S $-unit equation and therefore we aim for applying the inequality of Brownawell-Masser.
		
		As $ d \geq 2 $ and no summand is zero, there exists at least one minimal vanishing subsum, i.e. no proper subsubsum of this subsum vanishes, which contains two summands $ f_i \alpha_i^n $ and $ f_j \alpha_j^n $ for $ i \neq j $.
		Dividing this equation by $ f_j \alpha_j^n $, Theorem \ref{thm:brownawellmasser} gives
		\begin{equation*}
			\Hc \left( \frac{f_i \alpha_i^n}{f_j \alpha_j^n} \right) \leq \bino{d}{2} \left( \abs{S} + \max \setb{0, 2\gfr-2} \right) =: C_1.
		\end{equation*}
		
		From this we get by using some properties of the height given in Lemma \ref{lemma:heightproperties} the upper bound
		\begin{align*}
			n \cdot \Hc \left( \frac{\alpha_i}{\alpha_j} \right) &= \Hc \left( \frac{\alpha_i^n}{\alpha_j^n} \right) \\
			&\leq \Hc \left( \frac{f_i \alpha_i^n}{f_j \alpha_j^n} \right) + \Hc \left( \frac{f_j}{f_i} \right) \\
			&\leq C_1 + \max_{1 \leq k,l \leq d} \Hc \left( \frac{f_k}{f_l} \right) =: C_2
		\end{align*}
		and, since $ G_n $ is non-degenerate, finally
		\begin{equation*}
			n \leq \frac{C_2}{\Hc \left( \frac{\alpha_i}{\alpha_j} \right)} \leq \frac{C_2}{\min_{k \neq l} \Hc \left( \frac{\alpha_k}{\alpha_l} \right)} =: C_3.
		\end{equation*}
		Thus the theorem is proven.
	\end{proof}
	
	The proof of our second theorem follows the same idea but it becomes more complicated due to the second exponential variable:
	
	\begin{proof}[Proof of Theorem \ref{thm:twosummands}]
		As in the previous proof we may assume that $ \alpha_1, \ldots, \alpha_d $ as well as $ f_1, \ldots, f_d $ are all $ S $-units since increasing the set $ S $ only makes the theorem stronger.
		Then the condition $ G_n + G_m \in \Oc_S^* $ with $ n > m $ can be written as the Diophantine equation
		\begin{equation}
			\label{eq:suniteqth2}
			f_1 \alpha_1^n + \cdots + f_d \alpha_d^n + f_1 \alpha_1^m + \cdots + f_d \alpha_d^m - s = 0
		\end{equation}
		in unknowns $ n > m \in \NN $ and $ s \in \Oc_S^* $.
		This is an $ S $-unit equation and therefore we aim for applying the inequality of Brownawell-Masser.
		
		Now we distinguish some different cases.
		First assume that there is a minimal vanising subsum containing two summands $ f_i \alpha_i^m $ and $ f_j \alpha_j^m $ for $ i \neq j $.
		In the same way as in the proof of Theorem \ref{thm:onesummand} we get by using Theorem \ref{thm:brownawellmasser} the bound $ m \leq C_4 $.
		If there is also a minimal vanishing subsum containing two summands $ f_k \alpha_k^n $ and $ f_l \alpha_l^n $ for $ k \neq l $, then we get in the same way $ n \leq C_5 $ and are done.
		Otherwise there is a minimal vanishing subsum containing two summands $ f_k \alpha_k^n $ and $ f_l \alpha_l^m $.
		Dividing this equation by $ f_l \alpha_l^m $, Theorem \ref{thm:brownawellmasser} implies
		\begin{equation*}
			\Hc \left( \frac{f_k \alpha_k^n}{f_l \alpha_l^m} \right) \leq \bino{2d}{2} \left( \abs{S} + \max \setb{0, 2\gfr-2} \right) =: C_6.
		\end{equation*}
		Hence by Lemma \ref{lemma:heightproperties} we get
		\begin{align*}
			n \cdot \Hc (\alpha_k) &= \Hc (\alpha_k^n) \\
			&\leq \Hc \left( \frac{f_k \alpha_k^n}{f_l \alpha_l^m} \right) + \Hc (f_k) + \Hc (f_l) + \Hc (\alpha_l^m) \\
			&\leq C_6 + 2 \cdot \max_{1 \leq r \leq d} \Hc (f_r) + m \cdot \Hc (\alpha_l) \\
			&\leq C_6 + 2 \cdot \max_{1 \leq r \leq d} \Hc (f_r) + C_4 \cdot \max_{1 \leq r \leq d} \Hc (\alpha_r) =: C_7
		\end{align*}
		and thus
		\begin{equation*}
			n \leq \frac{C_7}{\Hc (\alpha_k)} \leq \frac{C_7}{\min_{1 \leq r \leq d} \Hc (\alpha_r)} =: C_8.
		\end{equation*}
		
		The second case that there is a minimal vanising subsum containing two summands $ f_i \alpha_i^n $ and $ f_j \alpha_j^n $ for $ i \neq j $ is completely analogous.
		
		In the sequel we may assume that each minimal vanishing subsum contains neither two summands with $ n $ nor two summands with $ m $ in the exponent.
		Therefore any minimal vanishing subsum contains exactly one term of the form $ f_i \alpha_i^n $ and one term of the form $ f_j \alpha_j^m $.
		
		In the case $ i = j $ Theorem \ref{thm:brownawellmasser} yields
		\begin{equation*}
			\Hc (\alpha_i^{n-m}) = \Hc \left( \frac{f_i \alpha_i^n}{f_j \alpha_j^m} \right) \leq C_6
		\end{equation*}
		which implies
		\begin{equation*}
			n-m \leq \frac{C_6}{\min_{1 \leq r \leq d} \Hc (\alpha_r)} =: C_9.
		\end{equation*}
		Thus we have the representation $ n = m + b $ with $ b \leq C_9 $.
		For each of this finitely many possiblities we insert the representation into eqaution \eqref{eq:suniteqth2} to get
		\begin{equation*}
			f_1 (1 + \alpha_1^b) \alpha_1^m + \cdots + f_d (1 + \alpha_d^b) \alpha_d^m - s = 0.
		\end{equation*}
		Now we enlarge the set $ S $ once to a set $ S' $ such that $ 1 + \alpha_r^b $ is an $ S' $-unit for all $ r = 1, \ldots, d $ and $ b \leq C_9 $.
		This yields
		\begin{equation}
			\label{eq:reduced}
			f_1' \alpha_1^m + \cdots + f_d' \alpha_d^m - s = 0
		\end{equation}
		for new coefficients $ f_1', \ldots, f_d' $.
		By Theorem \ref{thm:onesummand} we get $ m \leq C_{10} $.
		Hence we end with $ n \leq C_{10} + C_9 =: C_{11} $.
		
		In the final case $ i \neq j $ Theorem \ref{thm:brownawellmasser} gives
		\begin{equation*}
			\Hc \left( \frac{f_i \alpha_i^n}{f_j \alpha_j^m} \right) \leq C_6
		\end{equation*}
		and, by Lemma \ref{lemma:heightproperties}, it follows
		\begin{equation*}
			\Hc \left( \frac{\alpha_i^n}{\alpha_j^m} \right) \leq C_{12}.
		\end{equation*}
		Since the $ \alpha_r $ are not constant and pairwise multiplicatively independent, we can apply Lemma \ref{lemma:quotofindep} which gives us the bound $ \max \setb{n,m} \leq C_{13} $.
		Taking the maximum of all appearing upper bounds concludes the proof.
	\end{proof}


\begin{thebibliography}{99}
		\bibitem{bhoi-panda-rout-}
			\textsc{P.K. Bhoi, G.K. Panda and S.S. Rout},
			Sums of $ S $-units and perfect powers in recurrence sequences,
			\textit{preprint}, arXiv:2104.04808.
		\bibitem{bravo-luca-2016}
			\textsc{J.J. Bravo and F. Luca},
			On the Diophantine equation $ F_n + F_m = 2^a $,
			\textit{Quaestiones Math.} \textbf{39} (2016), no. 3, 391-400.
		\bibitem{brownawell-masser-1986}
			\textsc{W. D. Brownawell and D. W. Masser},
			Vanishing sums in function fields,
			\textit{Math. Proc. Camb. Phil. Soc.} \textbf{100} (1986), no. 3, 427-434.
		\bibitem{fuchs-heintze-p8}
			\textsc{C. Fuchs and S. Heintze},
			A function field variant of Pillai's problem,
			\textit{J. Number Theory} \textbf{222} (2021), 278-292.
		\bibitem{fuchs-karolus-kreso-2019}
			\textsc{C. Fuchs, C. Karolus and D. Kreso},
			Decomposable polynomials in second order linear recurrence sequences,
			\textit{Manuscripta Math.} \textbf{159(3)} (2019), 321-346.
		\bibitem{pink-ziegler-2018}
			\textsc{I. Pink and V. Ziegler},
			Effective resolution of Diophantine equations of the form $ u_n + u_m = w p_1^{z_1} \cdots p_s^{z_s} $,
			\textit{Monatsh. Math.} \textbf{185} (2018), no. 1, 103-131.
	\end{thebibliography}
\end{document}